\documentclass[letterpaper, 10 pt, conference]{ieeeconf}  
\usepackage{graphicx}   
\usepackage{mathrsfs}
\usepackage{amsmath}
\usepackage{xspace}
\usepackage{algorithm}
\usepackage{algorithmic}
\usepackage{xcolor}
\usepackage{amssymb}
\usepackage{amssymb}
\usepackage{hyperref}
\newtheorem{remark}{Remark}

\newtheorem{assumption}{Assumption}
\newtheorem{definition}{Definition}
\newtheorem{theorem}{Theorem}

\newcommand{\GR}[1]{\textcolor{black}{#1}}

\IEEEoverridecommandlockouts                              
\title{\LARGE \bf
Constrained Optimization on Matrix Lie Groups via
Interior-Point Method
}

\author{Aclécio J. Santos$^{1}$, Jean C. Pereira$^{2}$, Guilherme V. Raffo$^{1,3}$
\thanks{\copyright 2026. This manuscript version is made available under the CC-BY-NC-ND 4.0 license \href{https://creativecommons.org/licenses/by-nc-nd/4.0/}{https://creativecommons.org/licenses/by-nc-nd/4.0/}. This paper was not presented at any IFAC meeting. This work was supported by the Brazilian agencies Coordination for the Improvement of Higher Education Personnel (CAPES) – Finance Code 001, CNPq under grants 317058/2023-1 and 422143/2023-5, and FAPEMIG.}
\thanks{$^{1}$A. J. Santos and G. V. Raffo are with the Graduate Program in Electrical Engineering, Universidade Federal de Minas Gerais, Av. Antônio Carlos 6627, 31270-901, Belo Horizonte,
MG, Brazil.
{\footnotesize\tt \{aclecio1999, raffo\}@ufmg.br}}
\thanks{$^{2}$J. C. Pereira is with Department of Mechatronics Engineering, {\em Campus} Divin\'{o}polis --- CEFET--MG, Divin\'{o}polis, MG, 35503-822, Brazil} 
\thanks{$^{3}$ G. V. Raffo is also with the Department of Electronic Engineering, Universidade Federal de Minas Gerais.
        }
}

\begin{document}

\maketitle
\thispagestyle{empty}
\pagestyle{empty}

\begin{abstract}
This paper proposes an interior-point framework for constrained optimization problems whose decision variables evolve on matrix Lie groups. The proposed method, termed the Matrix Lie Group Interior-Point Method (MLG-IPM), operates directly on the group structure using a minimal Lie algebra parametrization, avoiding redundant matrix representations and eliminating explicit dependence on Riemannian metrics. A primal-dual formulation is developed in which the Newton system is constructed through sensitivity and curvature matrices. Also\GR{,} multiplicative updates are performed via the exponential map, ensuring intrinsic feasibility with respect to the group structure while maintaining strict positivity of slack and dual variables through a barrier strategy. A local analysis establishes quadratic convergence under standard regularity assumptions and characterizes the behavior under inexact Newton steps. Statistical comparisons against Riemannian Interior-Point Methods\GR{, specifically for} optimization problems defined over the Special Orthogonal Group $\mathrm{SO}(n)$ and  Special Linear Group $\mathrm{SL}(n)$\GR{, demonstrate that the proposed approach achieves} higher success rates, fewer iterations, \GR{and superior} numerical accuracy\GR{. Furthermore, its} robustness under perturbations suggests that \GR{this method serves as a consistent and reliable} alternative for structured manifold optimization.
\end{abstract}

\textbf{\textit{Index Terms---} Lie Group, Matrix Lie Group, IPM, Constrained Optimization, Interior-Point Method.}

\section{INTRODUCTION}

Optimization algorithms are widely used in robotics, control, \GR{and} machine learning, \GR{as they enable} the computation of optimal solutions for problems involving multiple criteria and constraints. In robotics and control \GR{specifically}, decision variables often represent configurations, poses, and spatial transformations of physical systems. In these settings, \GR{in addition to} minimizing a cost functional, it is necessary to \GR{satisfy} physical limits, safety requirements, and \GR{the underlying} system dynamics\GR{. This ensures} that the \GR{resulting} solution is not only optimal but also operationally feasible \cite{pedregal2004introduction}.

In many of these problems, the variables do not belong to $\mathbb{R}^n$, but rather to matrix Lie groups that model geometric transformations, such as rotations and rigid body motions. This \GR{is the case} when working with rotation matrices and homogeneous transformations, which must satisfy structural properties \GR{including} orthogonality, unit determinant, closure, and invertibility \cite{absil2008optimization}. When these problems are formulated in \GR{standard} Euclidean spaces, additional nonlinear constraints must be \GR{explicitly} imposed to \GR{ensure} the solution remains \GR{within} the matrix group\GR{. However, this approach increases} both \GR{the} analytical and computational \GR{complexities} \cite{trockman2021orthogonalizing}.

To \GR{account for} this geometric structure, \GR{specialized} approaches have been developed \GR{to} perform optimization \GR{directly} on differentiable manifolds and, in particular, on matrix Lie groups. \GR{The most prominent among these} are Riemannian optimization methods, which explicitly exploit the \GR{group's} geometry through tools such as the tangent space, Riemannian metrics, and the exponential map. Accordingly, the \GR{optimization} iterations are designed to remain intrinsically on the group, \GR{thereby eliminating} the need \GR{for} additional constraints to preserve algebraic properties \cite{hu2020brief}.

In this context, Riemannian methods can be \GR{categorized}, \GR{analogous} to the Euclidean case, into first- and second-order approaches, depending on whether they use only gradient information or incorporate \GR{second-order} curvature. Among first-order methods, Riemannian gradient descent stands out, where the descent direction is computed \GR{within} the tangent space, while the update is \GR{executed via} the exponential map or a retraction \GR{to ensure} the iterates remain on the group \cite{chen2021decentralized,bonnabel2013stochastic,samir2012gradient}. Second-order methods, such as the Riemannian Newton method, use the Riemannian Hessian, defined as the covariant derivative of the gradient, to \GR{exploit} curvature information and \GR{derive} more efficient search directions \cite{smith2014optimization}.

When \GR{optimization problems involve additional} equality and inequality constraints \GR{alongside the group structure}, extensions that combine Riemannian geometry with classical constrained optimization techniques are required. In this setting, Riemannian interior-point \GR{and barrier} methods are particularly relevant. These approaches \GR{formulate} barrier functions \GR{that} are compatible with the matrix Lie group structure, \GR{with} search directions computed \GR{within} the tangent space. \GR{Such} methods are \GR{critical for} control and \GR{motion} planning problems, \GR{where} the geometric \GR{integrity} of the Lie group \GR{must be preserved} while \GR{simultaneously} satisfying operational constraints in a systematic and numerically robust manner \cite{lai2024riemannian,nesterov2002riemannian,hirai2023interior}.

Despite their conceptual advantages, optimization methods on matrix Lie groups still present structural and numerical challenges. In particular, the matrix representation of the tangent space may introduce redundancies and linear dependencies associated with the chosen parametrization. Consequently, different matrices \GR{may} represent the same geometric direction, \GR{adversely affecting} numerical conditioning and computational efficiency \cite{bartelt2026constructivevectorfieldspath}. Moreover, existing Riemannian formulations inherently depend on the choice of a Riemannian metric, which determines the explicit form of the gradient and the Hessian. However, a bi-invariant Riemannian metric is not always available for a given group \cite{boumal2014manopt,milnor1976curvatures}. These limitations motivate the development of optimization algorithms that operate with minimal representations of matrix Lie groups, thereby \GR{eliminating} redundancies and reducing \GR{the} explicit dependence on metric structures and auxiliary geometric operations.

In this work, we \GR{introduce} the MLG-IPM (Matrix Lie Group Interior-Point Method), \GR{an optimizer} designed to solve constrained optimization problems whose decision variables belong to general matrix Lie groups. The proposed \GR{method} operates directly on the group structure and uses a minimal parametrization of the gradient\GR{. This approach effectively avoids the} redundancies \GR{inherent in standard} matrix representations and \GR{eliminates} the need to explicitly define \GR{a} Riemannian metric. In addition to the algorithmic \GR{framework, we provide} a local convergence analysis\GR{, establishing} theoretical guarantees for \GR{the method's} behavior in the neighborhood of \GR{an optimal} solution.

\textbf{Notation:} Scalars and vectors are denoted by lowercase letters, while stacked vectors composed of subvectors are denoted by bold lowercase letters. Matrices are denoted by bold uppercase letters, and $\mathbb{R}$ denotes the set of real numbers. Let $\mathrm{G}$ be a matrix Lie group with Lie algebra $\mathfrak{g}$, and let $\mathcal{S} : \mathbb{R}^m \to \mathfrak{g}$ be a linear map. Variations on the group are expressed via the exponential map $\exp(\cdot)$. We consider a tuple of matrices $\mathcal{X}\triangleq\{\mathbf{X}_1,\dots,\mathbf{X}_n\}$, with each $\mathbf{X}_i \in \mathrm{G}$. For a stacked vector $\boldsymbol{\zeta}\triangleq[\zeta_1^\top,\dots,\zeta_n^\top]$, where $\zeta_i \in \mathbb{R}^{m_i}$, we define
$
\mathcal{X}\boldsymbol{\exp}(\mathcal{S}(\boldsymbol{\zeta})) \triangleq
\{\mathbf{X}_1 \exp(\mathcal{S}(\zeta_1)),\dots,
\mathbf{X}_n \exp(\mathcal{S}(\zeta_n))\},$
which represents perturbations along the Lie algebra of each component. The identity matrix is denoted by $\mathbf{I}$, and $\operatorname{diag}(\cdot)$ forms a diagonal matrix from its arguments.

\section{Preliminaries}
This section establishes the mathematical foundations for the proposed framework. We review essential differential constructions on matrix Lie groups and derive first- and second-order linearizations via local Lie algebra coordinates. These derivations provide the sensitivity and curvature matrices that replace the classical Jacobian and Hessian.
\begin{definition}(\cite{hall2013lie})
A matrix Lie group is a subgroup $\mathrm{G}$ of $\mathsf{GL}(n;\mathbb{C})$ with the following property:
\GR{if} $\mathbf{A_m}$ is any sequence of matrices in $\mathrm{G}$, and $\mathbf{A_m}$ converges to some matrix $\mathbf{A}$,
then either $\mathbf{A}$ is in $\mathrm{G}$ or $\mathbf{A}$ is not invertible.
\end{definition}
\color{black}
\begin{definition} (\cite{bartelt2026constructivevectorfieldspath})
Let $\mathrm{G}$ be an $m$-dimensional Lie group and $\mathbf{G} \in \mathrm{G}$. 
Given a linear map $\mathcal{S} : \mathbb{R}^m \to \mathfrak{g}$ and a
differentiable function $f : \mathrm{G} \to \mathbb{R}$, 
the gradient $\mathcal{D}f_{[\mathbf{G}]}$ is defined by
\begin{align}
\mathcal{D}f_{[\mathbf{G}]}\boldsymbol{\zeta} {=} \lim_{\varepsilon \to 0} 
\frac{f\big(\mathbf{G} \exp(\mathcal{S}(\boldsymbol{\zeta})\,\varepsilon)\big) - f(\mathbf{G})}{\varepsilon}, \quad \varepsilon \in \mathbb{R},
\label{eq:derivative}
\end{align}
where $\boldsymbol{\zeta} \in \mathbb{R}^m$ denotes the generalized twist, and
$\setlength{\arraycolsep}{1.8pt}
\mathcal{D}f_{[\mathbf{G}]} = 
\begin{bmatrix} \ell_1(\mathbf{G}) & \ell_2(\mathbf{G}) & \cdots & \ell_m(\mathbf{G}) \end{bmatrix}, 
\,
\ell_m : \mathrm{G} \to \mathbb{R}.$
\end{definition}
\begin{remark}(\cite{bartelt2026constructivevectorfieldspath})
Equivalently, the gradient  $\mathcal{D}f_{[\mathbf{G}]}$  admits the differential
representation $\mathcal{D}f_{[\mathbf{G}]}\, \boldsymbol{\zeta} = \left.
\frac{d}{d\varepsilon}
f\!\big(\mathbf{G} \exp\!\big( \mathcal{S}(\boldsymbol{\zeta}) \, \varepsilon \big) \big)
\right|_{\varepsilon = 0}.$
\end{remark}
\begin{definition}
Let $\mathcal{G} = \{\mathbf{G}_1, \ldots, \mathbf{G}_n\}$ be a finite
collection of group elements.
The \emph{gradient} of the scalar function $f$ evaluated on $\mathcal{G}$ is
defined as $\setlength{\arraycolsep}{1.8pt}
\nabla_f^{\mathcal{G}}
=
\begin{bmatrix}
\mathcal{D}f_{[\mathbf{G}_1]}&
\mathcal{D}f_{[\mathbf{G}_2]} &
\cdots &
\mathcal{D}f_{[\mathbf{G}_n]}
\end{bmatrix}
\in \mathbb{R}^{1\times (n \cdot m)}.$
\end{definition}

\begin{definition}
Let $f = [f^1, \ldots, f^l]^\top :  \mathrm{G} \to \mathbb{R}^l$
be a differentiable mapping, where each component $f^j : \mathrm{G} \to \mathbb{R}$ is scalar-valued.
The Sensitivity Matrix of $f$ at $\mathcal{G}$ is defined as the matrix representation of the differential, 
$\boldsymbol{\mathfrak{J}}_{f}^\mathcal{G}
=
\begin{bmatrix}
\mathcal{D}f^1_{[\mathbf{G}_1]} & \cdots & \mathcal{D}f^1_{[\mathbf{G}_n]} \\
\vdots & \ddots & \vdots \\
\mathcal{D}f^l_{[\mathbf{G}_1]} & \cdots & \mathcal{D}f^l_{[\mathbf{G}_n]}
\end{bmatrix}
\in \mathbb{R}^{l\times (n \cdot m)}.
\label{eq:jacobian-vector}$
\end{definition}
\begin{definition}
Let $f : \mathrm{G} \to \mathbb{R}$ be twice differentiable.
The Curvature Matrix of $f$ at $\mathcal{G}$ is defined as the matrix representation of the second differential, namely
$\boldsymbol{\mathfrak{H}}^{\mathcal{G}}_f= \boldsymbol{\mathfrak{J}}_{(\nabla_f^\mathcal{G})^\top}^\mathcal{G},$
where $\boldsymbol{\mathfrak{H}}^{\mathcal{G}}_f \in \mathbb{R}^{(n \cdot m)\times (n \cdot m)}$.
\end{definition}
\begin{remark}
Although termed Sensitivity Matrix and Curvature Matrix, these objects correspond to the first- and second-order linearizations of the mapping in Lie algebra coordinates, playing roles analogous to the Jacobian and Hessian on $\mathbb{R}^n$ in Newton-type methods.
\end{remark}
\section{Matrix Lie Group Interior-Point Method}

This section \GR{details} the proposed interior-point framework formulated directly on matrix Lie groups. \GR{Drawing on the foundations in} \cite{lai2024riemannian,el1996formulation}\GR{, we} extend the classical primal--dual interior-point paradigm to optimization problems \GR{where} decision variables evolve on smooth matrix manifolds endowed with a Lie group structure.


Consider the nonlinear optimization problem
\begin{equation}
\begin{aligned}
&\min_{\mathcal{G}} \quad  f(\mathcal{G}) \\
&\text{s.t.}
 \qquad g_i(\mathcal{G}) \le 0, \quad i=1,\ldots,n_1, \\
 &\qquad\quad h_j(\mathcal{G}) = 0, \quad j=1,\ldots,n_2.
\end{aligned}
\label{eq:ipm-problem}
\end{equation}

The objective function $f : \mathcal{G} \to \mathbb{R}$ and the constraint \GR{mappings} $g_i : \mathcal{G} \to \mathbb{R}$ and $h_j : \mathcal{G} \to \mathbb{R}$ are assumed to be twice continuously differentiable. We further assume that the feasible set is nonempty and \GR{contains} at least one strictly feasible point \GR{such that} $g_i(\mathcal{G}) \le 0$ and $h_j(\mathcal{G}) = 0$ for all $i,j$.



\subsection{Lagrangian and primal--dual formulation}

\GR{The} primal-dual formulation \GR{provides a more operational perspective through the} Lagrangian associated with the problem~\eqref{eq:ipm-problem}, $\mathcal{L}(\mathcal{G},\nu,\lambda) = f(\mathcal{G}) + \nu^\top g(\mathcal{G}) + \lambda^\top h(\mathcal{G})$\GR{, with} $\nu \in \mathbb{R}^{n_1}$ and $\lambda \in \mathbb{R}^{n_2}$ \GR{denoting} the Lagrange multipliers associated with the inequality and equality constraints, respectively.


\GR{The} primal-dual interior-point framework \GR{reformulates} inequality constraints as equalities by introducing slack variables $s \in \mathbb{R}^{n_1}$, such that 
$g(\mathcal{G}) + s = 0, \; s_i > 0, i = 1,\cdots, n_1$.
This transformation \GR{enforces} strict feasibility \GR{through} the positivity of \GR{$s$. Consequently,  the} Karush–Kuhn–Tucker (KKT) conditions \GR{are defined}  by primal feasibility of the \GR{augmented equality system, dual feasibility, Lagrangian stationary on the Lie group, and} complementarity between slack and dual variables.


The stationarity condition is obtained by differentiating the Lagrangian with respect to $\mathcal{G}$:
$
\nabla_\mathcal{L}^{\mathcal{G}}
=
\nabla_f^\mathcal{G}
+ (\boldsymbol{\mathfrak{J}}_{g}^{\mathcal{G}})^\top \nu
+ (\boldsymbol{\mathfrak{J}}_{h}^\mathcal{G})^\top \lambda.
\label{eq:grad-lagrangian}$
Newton-type primal--dual methods \GR{require} second-order information\GR{,} provided by the \GR{Lagrangian} curvature matrix,
$\boldsymbol{\mathfrak{H}}^{\mathcal{G}}_\mathcal{L}
=
\boldsymbol{\mathfrak{H}}^{\mathcal{G}}_f
+ \sum_{i=1}^{n_1} \nu_i \boldsymbol{\mathfrak{H}}^{\mathcal{G}}_{g_i}
+ \sum_{j=1}^{n_2} \lambda_j \boldsymbol{\mathfrak{H}}^{\mathcal{G}}_{h_j}.
\label{eq:hessian-lagrangian}$
If the inequality constraints are linear, their second-order derivatives vanish. \GR{In such cases,} the Curvature Matrix coincides with that of the objective function augmented only by the second-order terms associated with the equality constraints.

Under these considerations, the primal-dual KKT conditions take the form
$\nabla_\mathcal{L}^\mathcal{G} = 0, \label{eq:kkt-stationarity} 
g(\mathcal{G}) + s
= 0, \label{eq:kkt-primal-g} 
h(\mathcal{G})
= 0, \label{eq:kkt-primal-h} 
\mathbf{S} \nu - \mu e
= 0,$
where $\mathbf{S} = \mathrm{diag}(s)$, $\mu > 0$ \GR{is} the barrier parameter, and $e$ is the vector of ones.
The classical complementarity condition $s_i \nu_i = 0$ is replaced by the relaxed relation $s_i \nu_i = \mu,$ which guarantees that all iterates remain strictly in the interior of the feasible region \cite{el1996formulation}.

\color{black}
To apply Newton’s method, we define the vector field
$\mathsf{F}
=\setlength{\arraycolsep}{2pt}
\begin{bmatrix}
(\nabla_\mathcal{L}^\mathcal{G})^\top &
(g(\mathcal{G}) + s)^\top&
h(\mathcal{G})^\top &
(\mathbf{S} \nu - \mu e)^\top
\end{bmatrix}^\top.$ The stacked primal--dual variable is defined as
$
\mathbf{z}
\triangleq
\begin{bmatrix}
\nu^\top &
\lambda^\top &
s^\top
\end{bmatrix}^\top \in \mathbb{R}^d$, with $d \triangleq p + 2m$. The primal--dual variables are then embedded into the affine matrix structure
$
\mathbf{Z}
\triangleq
\begin{bmatrix}
\mathbf{I_d} & \mathbf{z} \\
\mathbf{0}_{1\times d} & 1
\end{bmatrix}
\in \mathsf{T}(d),$
where $\mathsf{T}(d)$ denotes the $d$-dimensional translation group, represented by affine matrices whose action corresponds to translations in $\mathbb{R}^d$ \cite{bartelt2026constructivevectorfieldspath}. This representation allows multiplicative updates through the exponential map while preserving positivity and the complementarity structure.

Linearizing the KKT conditions around the current iterate $\mathcal{X}^k = \{\mathcal{G}^k,\mathbf{Z}^k\}$ leads to the primal--dual Newton system
\begin{align}
\boldsymbol{\mathfrak{J}}^{\mathcal{X}^k}_{\mathsf{F}} \cdot \boldsymbol{\Delta x^k}
=- \mathsf{F}(\mathcal{X}^k),
\label{EQ:PRIMAL_DUAL}
\end{align}
where the search direction is defined as
$\boldsymbol{\Delta x^k} =[(\Delta x_1^k)^\top,\cdots,(\Delta x_n^k)^\top,(\Delta x_{n+1})^\top]^\top
\in \mathbb{R}^{nm+d}$, with $\Delta x_{1:n}^k \in \mathbb{R}^{m} $, $\Delta x_{n+1}^k \in \mathbb{R}^{d} $, and $\boldsymbol{\mathfrak{J}}^{\mathcal{X}^k}_{\mathsf{F}}$ evaluated at $\mathcal{X}$ \GR{being} given by
$
\begingroup
\setlength{\arraycolsep}{0.1pt}
\boldsymbol{\mathfrak{J}}^{\mathcal{X}^k}_{\mathsf{F}}=
\begin{bmatrix}
\boldsymbol{\mathfrak{H}}^{\mathcal{G}^k}_\mathcal{L}
& (\boldsymbol{\mathfrak{J}}_g^{\mathcal{G}^k})^\top
& \boldsymbol{\mathfrak{J}}_h^{\mathcal{G}^k})^\top
& 0 \\
\boldsymbol{\mathfrak{J}}_g^{\mathcal{G}^k}
& 0
& 0
& I \\
\boldsymbol{\mathfrak{J}}_h^{\mathcal{G}^k}
& 0
& 0
& 0 \\
0
& \mathbf{S}^k
& 0
& \mathbf{\mathrm{diag}}(\nu^k)
\end{bmatrix}.
\endgroup
$

After solving the Newton system, the primal variables evolve on the Lie group according to
\begin{equation}
\mathcal{X}^{k+1}
=
\mathcal{X}^{k}\boldsymbol{\exp}\big(\mathcal{S}(\alpha\boldsymbol{\Delta x^k)}\big),
\label{eq:passo_G}
\end{equation}
where the update is applied componentwise, with 
$\mathcal{X}^{k}=\{\mathbf{G}_1^{k},\cdots,\mathbf{G}_n^{k},\mathbf{Z}^{k}\}$.


\color{black}

The step size $\alpha \in (0,1]$ is determined using a fraction-to-the-boundary strategy in order to preserve strict positivity of the slack and dual variables. To this end, we compute the maximum admissible primal and dual step lengths as
$\alpha_{\mathrm{pri}}
=
\min_{i:\,\Delta s_i<0}
\left\{
-\frac{s_i}{\Delta s_i}
\right\},
\,
\alpha_{\mathrm{dual}}
=
\min_{i:\,\Delta \nu_i<0}
\left\{
-\frac{\nu_i}{\Delta \nu_i}
\right\},$
where the minimum over an empty index set is interpreted as $+\infty$. The actual step size is then defined by
$
\alpha
=
\tau\,
\min\!\big(1,\alpha_{\mathrm{pri}},\alpha_{\mathrm{dual}}\big),
$
with $\tau \in (0,1)$ ensuring that $s^{k+1} > 0$ and $\nu^{k+1} > 0$.


The complementarity parameter is then updated according to
$
\mu
=
\sigma\,\frac{s^\top \nu}{m},
$
with $\sigma \in [0.1,\,0.5]$. The iterations are repeated until $\mu$ becomes sufficiently small. The complete MLG-IPM procedure is summarized in Algorithm~\ref{alg:mlg-ipm}.
\begin{remark}
\GR{Multiplicative updates via the exponential map ensure that iterates remain on the Lie group at every step. In contrast to purely additive first-order schemes \cite{polik2010interior,vanderbei1999interior,lee2012exponential}, this approach inherently incorporates higher-order geometric information derived from the group structure.}
\end{remark}
\color{black}

\begin{algorithm}[t]
\caption{Matrix Lie Group Interior-Point Method}
\label{alg:mlg-ipm}
\begin{algorithmic}[1]
\STATE \textbf{Input:} Initial strictly feasible $\mathcal{X}^0$ with $s^0>0$, $\nu^0>0$;
tolerance $\varepsilon_{\text{tol}}>0$; parameters $\sigma \in (0.1,0.5)$, $\tau \in (0,1)$.
\STATE Set $k \gets 0$.
\WHILE{$\|\mathsf{F}(\mathcal{X}^k)\| > \varepsilon_{\text{tol}}$}
\STATE Compute  $\mathsf{F}^k$.
\STATE Form the Newton matrix $\boldsymbol{\mathfrak{J}}^{\mathcal{X}^k}_{\mathsf{F}}$.
\STATE Solve the linear system
$
\boldsymbol{\mathfrak{J}}^{\mathcal{X}^k}_{\mathsf{F}}\boldsymbol{\Delta x^k} = - \mathsf{F}^k$.
\STATE Compute maximum feasible step sizes:
$
\alpha_{\mathrm{pri}},
\alpha_{\mathrm{dual}}$
\STATE Set
$
\alpha^k
=
\tau\,
\min\!\big(1,\alpha_{\mathrm{pri}},\alpha_{\mathrm{dual}}\big).$
\STATE Update primal Lie group variables:
$
\mathcal{X}^{k+1}
=
\mathcal{X}^{k}
\boldsymbol{\exp}\big(\mathcal{S}(\alpha^k \Delta x^k)\big).$
\STATE Recover $\mathcal{X}^k$.
\STATE Update barrier parameter:
$
\mu^{k+1}
=
\sigma \frac{(s^{k+1})^\top \nu^{k+1}}{m}.
$
\STATE $k \gets k+1$.
\ENDWHILE
\STATE \textbf{Output:} $\mathcal{X}^k$.

\end{algorithmic}
\label{alorithm1}
\end{algorithm}

\section{Local Convergence Analysis}

\GR{This section presents the} local convergence analysis \GR{for} the proposed Newton-type method \GR{on} $\mathcal{X}$. \GR{Drawing on the} frameworks in \cite{lai2024riemannian,el1996formulation}, \GR{we suitably adapt classical results to accommodate the primal-dual formulation and the underlying Lie group geometry. All} assumptions and convergence results are reformulated to be consistent with the \GR{geometric} notation \GR{established} throughout \GR{this work}.

\begin{assumption}
There exists a  $\mathcal{X}^*$ satisfying the KKT conditions, i.e.,
$\mathsf{F}(\mathcal{X}^*) = 0$.
\label{ass:a1}
\end{assumption}
\begin{assumption}
The set
$
\{ \nabla_{h_j}^{\mathcal{G}^*}\}_{j=1}^{n_2}
\cup
\{\nabla_{g_i}^{\mathcal{G}^*}\}_{i\in \mathbb{A}(\mathcal{G}^*)}
$
is linearly independent, where the active set is defined as
$
\mathbb{A}(\mathcal{G}^*) := \{ i \mid g_i(\mathcal{G}^*) = 0 \}.
$
\label{ass:a2}
\end{assumption}

\begin{assumption}
$
\nu^*_i > 0
\,
\text{whenever}
\,
g_i(\mathcal{G}^*) = 0,
\;i=1,\dots,n_1 .
$
\label{ass:a3}
\end{assumption}

\begin{assumption}
The \GR{Lagrangian} curvature matrix satisfies
$
\boldsymbol{\zeta}^{\top}
\boldsymbol{\mathfrak{H}}^{\mathcal{G}^*}_{\mathcal{L}}
\boldsymbol{\zeta} > 0
$
for every nonzero vector 
$\boldsymbol{\zeta} \in \mathbb{R}^{n\cdot m}$ such that
$
\nabla_{h_j}^{\mathcal{G}^*}\boldsymbol{\zeta} = 0,
\; j=1,\dots,n_2,
$
and
$
\nabla_{g_i}^{\mathcal{G}^*}\boldsymbol{\zeta} = 0,
\;i\in \mathbb{A}(\mathcal{G}^*).
$
\label{ass:a4}
\end{assumption}
\begin{theorem}{(\cite{lai2024riemannian})}
If \GR{Assumptions} \ref{ass:a1}\,--\,\ref{ass:a4} hold at some point $\mathcal{G}^*$, 
then the Sensitivity Matrix $\boldsymbol{\mathfrak{J}}^{\mathcal{X}^*}_{\mathsf{F}}$ is nonsingular.
\label{th:t1}
\end{theorem}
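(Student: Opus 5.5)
The plan is the standard null-space argument for primal--dual KKT matrices: show that the homogeneous system $\boldsymbol{\mathfrak{J}}^{\mathcal{X}^*}_{\mathsf{F}}\,\boldsymbol{\Delta x}=0$ admits only the trivial solution. Write $\boldsymbol{\Delta x}=(\boldsymbol{\zeta},\Delta\nu,\Delta\lambda,\Delta s)$, where $\boldsymbol{\zeta}\in\mathbb{R}^{n\cdot m}$ is the Lie-algebra part. The ordering follows the block columns of $\boldsymbol{\mathfrak{J}}_{\mathsf{F}}$; the translation-group coordinates of $\mathbf{Z}$ enter linearly, so the exponential on $\mathsf{T}(d)$ reduces to ordinary addition. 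Since $\boldsymbol{\mathfrak{J}}_{\mathsf{F}}$ is square, injectivity is equivalent to nonsingularity. The four block rows give
\begin{align*}
&\boldsymbol{\mathfrak{H}}^{\mathcal{G}^*}_{\mathcal{L}}\boldsymbol{\zeta}+(\boldsymbol{\mathfrak{J}}_g^{\mathcal{G}^*})^\top\Delta\nu+(\boldsymbol{\mathfrak{J}}_h^{\mathcal{G}^*})^\top\Delta\lambda=0,\qquad
\boldsymbol{\mathfrak{J}}_g^{\mathcal{G}^*}\boldsymbol{\zeta}+\Delta s=0,\\
&\boldsymbol{\mathfrak{J}}_h^{\mathcal{G}^*}\boldsymbol{\zeta}=0,\qquad
\mathbf{S}^*\Delta\nu+\mathrm{diag}(\nu^*)\Delta s=0 .
\end{align*}
Throughout I take the point $\mathcal{X}^*$ to be a KKT point with $\mu=0$, so that $s_i^*\nu_i^*=0$, $s^*=-g(\mathcal{G}^*)\ge 0$ and $\nu^*\ge0$.

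First, I split the indices into the active set $\mathbb{A}=\mathbb{A}(\mathcal{G}^*)$ and its complement. For $i\notin\mathbb{A}$ we have $s_i^*>0$, hence $\nu_i^*=0$, and the last row gives $s_i^*\Delta\nu_i=0$, so $\Delta\nu_i=0$. For $i\in\mathbb{A}$ we have $s_i^*=0$, and Assumption~\ref{ass:a3} gives $\nu_i^*>0$. The last row then yields $\Delta s_i=0$, and the second row gives $\nabla_{g_i}^{\mathcal{G}^*}\boldsymbol{\zeta}=0$. Combined with the third row, $\boldsymbol{\zeta}$ lies in the critical subspace of Assumption~\ref{ass:a4}.

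Second, I multiply the first row on the left by $\boldsymbol{\zeta}^\top$. The term $\boldsymbol{\zeta}^\top(\boldsymbol{\mathfrak{J}}_h^{\mathcal{G}^*})^\top\Delta\lambda$ vanishes by the third row. The term $\boldsymbol{\zeta}^\top(\boldsymbol{\mathfrak{J}}_g^{\mathcal{G}^*})^\top\Delta\nu=\sum_i\Delta\nu_i\,\nabla_{g_i}^{\mathcal{G}^*}\boldsymbol{\zeta}$ also vanishes: inactive indices have $\Delta\nu_i=0$, and active ones have $\nabla_{g_i}^{\mathcal{G}^*}\boldsymbol{\zeta}=0$. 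This leaves $\boldsymbol{\zeta}^\top\boldsymbol{\mathfrak{H}}^{\mathcal{G}^*}_{\mathcal{L}}\boldsymbol{\zeta}=0$, and Assumption~\ref{ass:a4} forces $\boldsymbol{\zeta}=0$. The second row then gives $\Delta s=0$ for all indices, inactive ones included.

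Third, with $\boldsymbol{\zeta}=0$ the first row reduces to
\[
\sum_{i\in\mathbb{A}}\Delta\nu_i(\nabla_{g_i}^{\mathcal{G}^*})^\top+\sum_j\Delta\lambda_j(\nabla_{h_j}^{\mathcal{G}^*})^\top=0 .
\]
Assumption~\ref{ass:a2} (LICQ) then gives $\Delta\nu_{\mathbb{A}}=0$ and $\Delta\lambda=0$, so the kernel is trivial.

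The main obstacle is not the linear algebra, which is the Euclidean argument verbatim. It is justifying that the first block row really involves the symmetric curvature matrix $\boldsymbol{\mathfrak{H}}_{\mathcal{L}}$, and that the sensitivity rows use the same coordinates $\boldsymbol{\zeta}$. The symmetry is needed so that the quadratic-form step is meaningful and matches Assumption~\ref{ass:a4}. It requires checking that the curvature matrix, defined as the sensitivity of the transposed gradient, agrees with the assumption's quadratic form. For a non-symmetric $\boldsymbol{\mathfrak{H}}$, only the symmetric part enters $\boldsymbol{\zeta}^\top\boldsymbol{\mathfrak{H}}\boldsymbol{\zeta}$, and the argument goes through unchanged. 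A secondary check is the index bookkeeping: the column order $(\nu,\lambda,s)$ of $\mathbf{z}$ must match the block columns, and the paper's $d=p+2m$ should read $d=2n_1+n_2$.
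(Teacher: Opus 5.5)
Your proof is correct and is the standard kernel (injectivity) argument behind this result in the interior-point literature. The paper gives no proof of Theorem~\ref{th:t1}; it only cites \cite{lai2024riemannian}, and your write-up is that argument carried over to the Lie-algebra coordinates of $\boldsymbol{\mathfrak{J}}^{\mathcal{X}^*}_{\mathsf{F}}$, so no symmetry of $\boldsymbol{\mathfrak{H}}^{\mathcal{G}^*}_{\mathcal{L}}$ is needed (Assumption~\ref{ass:a4} is phrased in the same quadratic form $\boldsymbol{\zeta}^\top\boldsymbol{\mathfrak{H}}^{\mathcal{G}^*}_{\mathcal{L}}\boldsymbol{\zeta}$ that you obtain).
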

\begin{theorem}
\GR{Under} Assumptions \ref{ass:a1}-\ref{ass:a4}\GR{, } there exists a neighborhood $\mathcal{U}$ of $\mathcal{X}^*$ such that, for any initial point $\mathcal{X}^0 \in \mathcal{U}$ sufficiently close to $\mathcal{X}^*$, the  iteration
\begin{align}
\boldsymbol{\mathfrak{J}}^{\mathcal{X}^k}_{\mathsf{F}}\boldsymbol{\Delta x^k}
=
- \mathsf{F}(\mathcal{X}^k) + \rho_k,
\label{eq:wp}
\end{align}
combined with the exponential update \eqref{eq:passo_G}, is well defined and generates a sequence $\{\mathcal{X}^k\}$ converging to $\mathcal{X}^*$. The local convergence rate \GR{is governed by} the step \GR{size} $\alpha$ and residual $\rho_k$ \GR{as follows:}

{For ${\alpha{=}1}$:} 
\begin{enumerate}
\item[(i)] Quadratic convergence if $\rho_k{=}0$ or $\|\rho_k\| {=} O(\|\mathsf{F}(\mathcal{X}^k)\|^2)$;
\item[(ii)] Linear convergence if $\|\rho_k\| {=} O(\|\mathsf{F}(\mathcal{X}^k)\|)$.
\end{enumerate}

For ${0{<}\alpha{<}1}$:
\begin{enumerate}
\item[(i)] Linear convergence if $\rho_k{=}0$ or $\|\rho_k\| {=} O(\|\mathsf{F}(\mathcal{X}^k)\|)$.
\end{enumerate}

\label{th:t2}
\end{theorem}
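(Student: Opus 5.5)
The plan is to reduce the iteration to an inexact Newton method on a Euclidean chart. We work in exponential coordinates centered at $\mathcal{X}^*$ and transfer the classical local convergence theory of inexact Newton methods (Dembo--Eisenstat--Steihaug).

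\textbf{Step 1: the chart and its regularity.} Write $\mathcal{X} = \mathcal{X}^*\boldsymbol{\exp}(\mathcal{S}(\boldsymbol{\xi}))$ with $\boldsymbol{\xi}\in\mathbb{R}^{nm+d}$ small. Since $\exp$ is a local diffeomorphism near $0$, the map $\boldsymbol{\xi}\mapsto\mathcal{X}$ is a chart on a neighborhood $\mathcal{U}$ of $\mathcal{X}^*$. Define $\Phi(\boldsymbol{\xi}) \triangleq \mathsf{F}(\mathcal{X}^*\boldsymbol{\exp}(\mathcal{S}(\boldsymbol{\xi})))$. Because $f,g,h$ are $C^2$, $\Phi$ is $C^1$ with a locally Lipschitz derivative. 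By the definition of $\mathcal{D}$, the Euclidean Jacobian $D\Phi(0)$ equals the Sensitivity Matrix $\boldsymbol{\mathfrak{J}}^{\mathcal{X}^*}_{\mathsf{F}}$. At a nearby point, $\boldsymbol{\mathfrak{J}}^{\mathcal{X}}_{\mathsf{F}}$ differs from $D\Phi(\boldsymbol{\xi})$ only by right multiplication with the Jacobian of the map
\[
\boldsymbol{\eta}\mapsto \log\big(\exp(\mathcal{S}(\boldsymbol{\xi}))\exp(\mathcal{S}(\boldsymbol{\eta}))\big)\big|_{\boldsymbol{\eta}=0}.
\]
This Jacobian is the inverse of the right-trivialized differential of $\exp$, which has the form $\mathbf{I}+O(\|\boldsymbol{\xi}\|)$.

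\textbf{Step 2: invertibility near $\mathcal{X}^*$.} By Theorem~\ref{th:t1}, $\boldsymbol{\mathfrak{J}}^{\mathcal{X}^*}_{\mathsf{F}}$ is nonsingular. By continuity of the curvature and sensitivity matrices in $\mathcal{X}$, there exist $\mathcal{U}$ and $\beta>0$ with $\|(\boldsymbol{\mathfrak{J}}^{\mathcal{X}}_{\mathsf{F}})^{-1}\|\le\beta$ on $\mathcal{U}$. Hence \eqref{eq:wp} has a unique solution $\boldsymbol{\Delta x^k}=-(\boldsymbol{\mathfrak{J}}^{\mathcal{X}^k}_{\mathsf{F}})^{-1}(\mathsf{F}(\mathcal{X}^k)-\rho_k)$, which gives well-definedness as long as the iterates stay in $\mathcal{U}$.

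\textbf{Step 3: the one-step error estimate, which is the main obstacle.} Let $\boldsymbol{\xi}^{k+1}$ be the coordinate of $\mathcal{X}^k\boldsymbol{\exp}(\mathcal{S}(\alpha\boldsymbol{\Delta x^k}))$. By the Baker--Campbell--Hausdorff formula,
\[
\boldsymbol{\xi}^{k+1}=\boldsymbol{\xi}^k+\alpha A(\boldsymbol{\xi}^k)\boldsymbol{\Delta x^k}+O(\|\boldsymbol{\xi}^k\|\,\|\boldsymbol{\Delta x^k}\|^2+\|\boldsymbol{\Delta x^k}\|^2),
\]
where $A(\boldsymbol{\xi})$ is the matrix from Step 1. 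The same matrix links $\boldsymbol{\mathfrak{J}}^{\mathcal{X}^k}_{\mathsf{F}}$ to $D\Phi(\boldsymbol{\xi}^k)$, so $A(\boldsymbol{\xi}^k)\boldsymbol{\Delta x^k}$ is exactly the Euclidean inexact Newton step $-D\Phi(\boldsymbol{\xi}^k)^{-1}(\Phi(\boldsymbol{\xi}^k)-\rho_k)$.

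The second-order terms are all $O(\|\boldsymbol{\Delta x^k}\|^2)$, and $\|\boldsymbol{\Delta x^k}\|=O(\|\mathsf{F}\|+\|\rho_k\|)=O(\|\boldsymbol{\xi}^k\|)$. Here I use $\|\mathsf{F}(\mathcal{X}^k)\|\le L\|\boldsymbol{\xi}^k\|$, together with the reverse bound $\|\boldsymbol{\xi}^k\|\le c\|\mathsf{F}(\mathcal{X}^k)\|$ from nonsingularity. Combining this with Taylor's theorem, $\Phi(0)=0$ and $\Phi(\boldsymbol{\xi})=D\Phi(\boldsymbol{\xi})\boldsymbol{\xi}+O(\|\boldsymbol{\xi}\|^2)$, gives
\[
\|\boldsymbol{\xi}^{k+1}\|\le(1-\alpha)\|\boldsymbol{\xi}^k\|+C\big(\|\boldsymbol{\xi}^k\|^2+\|\rho_k\|\big).
\]
The care needed is that the BCH remainder is controlled uniformly on $\mathcal{U}$. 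The $\mathsf{T}(d)$ block is abelian, so its update is exactly additive and contributes no remainder.

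\textbf{Step 4: reading off the rates.} For $\alpha=1$ and $\|\rho_k\|=O(\|\mathsf{F}(\mathcal{X}^k)\|^2)=O(\|\boldsymbol{\xi}^k\|^2)$, the bound gives quadratic convergence. If only $\|\rho_k\|\le\eta\|\mathsf{F}(\mathcal{X}^k)\|$ with $\eta$ small enough, it gives a contraction factor $C\eta L+o(1)<1$, i.e.\ linear convergence. For $0<\alpha<1$ the factor is $(1-\alpha)+o(1)+C\eta L<1$, so the rate is linear.

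Induction then keeps the iterates in $\mathcal{U}$ and yields $\boldsymbol{\xi}^k\to0$, i.e.\ $\mathcal{X}^k\to\mathcal{X}^*$. The rates transfer between $\|\boldsymbol{\xi}^k\|$ and $\|\mathsf{F}(\mathcal{X}^k)\|$ by the equivalence $\|\mathsf{F}(\mathcal{X}^k)\|\asymp\|\boldsymbol{\xi}^k\|$.
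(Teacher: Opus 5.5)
Your argument is correct, but it takes a different route from the paper.

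\textbf{The paper's route.} The paper never builds a chart at $\mathcal{X}^*$. It Taylor-expands $\mathsf{F}$ along the update curve $t\mapsto\mathcal{X}^k\boldsymbol{\exp}(\mathcal{S}(t\boldsymbol{\Delta x^k}))$ at the \emph{current} iterate. The first-order term of that curve is $\boldsymbol{\mathfrak{J}}^{\mathcal{X}^k}_{\mathsf{F}}\boldsymbol{\Delta x^k}$ by the very definition of $\mathcal{D}$. Substituting \eqref{eq:wp} therefore gives at once $\mathsf{F}(\mathcal{X}^{k+1})=(1-\alpha)\mathsf{F}(\mathcal{X}^k)+\alpha\rho_k+O(\|\alpha\boldsymbol{\Delta x^k}\|^2)$. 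Combined with $\|\boldsymbol{\Delta x^k}\|\le\beta(\|\mathsf{F}(\mathcal{X}^k)\|+\|\rho_k\|)$, this is a recursion on the residual norm. The rates are read off from it, and convergence of the iterates is then attributed to nonsingularity.

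\textbf{What each route buys.} The residual-space route needs none of your Step~1 and Step~3 machinery, namely the factorization $\boldsymbol{\mathfrak{J}}^{\mathcal{X}}_{\mathsf{F}}=D\Phi(\boldsymbol{\xi})A(\boldsymbol{\xi})$ and the BCH expansion. It also gives the forcing condition in its natural form, $\|\rho_k\|\le\eta\|\mathsf{F}(\mathcal{X}^k)\|$ with essentially $\eta<1$. Measuring the error in the plain norm $\|\boldsymbol{\xi}\|$ instead forces you to take $\eta<1/(CL)$, roughly the reciprocal of the condition number of $D\Phi(0)$. You could recover $\eta<1$ with the weighted norm $\|D\Phi(0)\boldsymbol{\xi}\|$, as Dembo--Eisenstat--Steihaug do.

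Your error-space route is more rigorous exactly where the paper is thin. Since $\|\boldsymbol{\xi}^k\|$ decreases, induction keeps the iterates in $\mathcal{U}$, where the uniform constants hold. The equivalence $\|\mathsf{F}\|\asymp\|\boldsymbol{\xi}\|$ then yields convergence and rates for $\mathcal{X}^k$ itself. The paper only asserts that $\|\mathsf{F}(\mathcal{X}^k)\|\to0$ implies $\mathcal{X}^k\to\mathcal{X}^*$, which actually needs local injectivity of $\mathsf{F}$ and a summability argument for the steps.

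\textbf{The small forcing constant.} You are right to demand a small forcing constant in the linear case. A bare $\|\rho_k\|=O(\|\mathsf{F}(\mathcal{X}^k)\|)$ cannot suffice: for example, $\rho_k=2\mathsf{F}(\mathcal{X}^k)$ reverses the Newton step. The paper's bound \eqref{eq:bound} tacitly needs $C_1\eta$ small as well.

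\textbf{A correction that applies to both proofs.} $C^2$ data make $\mathsf{F}$, which contains $\nabla_{\mathcal{L}}^{\mathcal{G}}$, only $C^1$ with a continuous derivative, not a locally Lipschitz one. Your claim in Step~1 is therefore false as stated. The $O(\|\cdot\|^2)$ remainders in your Step~3 and in the paper's \eqref{eq:taylor_local} require $f,g,h\in C^3$, or Lipschitz second derivatives. With only $C^2$ data, the case $\alpha=1$, $\rho_k=0$ gives superlinear rather than quadratic convergence.
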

\begin{proof}
\GR{Applying \eqref{eq:derivative} to the vector field $\mathsf{F}$ yields}
\begin{equation}
\mathsf{F}\Big(\mathcal{X}^{k}\boldsymbol{\exp}(\mathcal{S}(\boldsymbol{\zeta})\varepsilon)\Big) 
= \mathsf{F}(\mathcal{X}^k)
+ \boldsymbol{\mathfrak{J}}_\mathsf{F}^\mathcal{X} \boldsymbol{\zeta}\,\varepsilon 
+ O(\|\boldsymbol{\zeta}\,\varepsilon \|^2),
\label{eq:gradient_expansion}
\end{equation}
where 
$\boldsymbol{\zeta} = [\zeta_1^\top, \cdots, \zeta_n^\top, \zeta_{n+1}^\top]^\top \in \mathbb{R}^{nm+d}$, with $\zeta_{1:n} \in \mathbb{R}^{m} $ and $\zeta_{n+1} \in \mathbb{R}^{d} $. \GR{By setting $\boldsymbol{\zeta}\,\varepsilon = \alpha \boldsymbol{\boldsymbol{\Delta x^k}}$ and noting that $\mathcal{X}^{k+1} = \mathcal{X}^k \exp(\mathcal{S}(\alpha \boldsymbol{\Delta x^k}))$}, \eqref{eq:gradient_expansion} reduces to
\begin{equation}
\begin{aligned}\boldsymbol{}
\mathsf{F}(\mathcal{X}^{k+1})
{=} \mathsf{F}(\mathcal{X}^k)
{+} \alpha \boldsymbol{\mathfrak{J}}^{\mathcal{X}^k}_{\mathsf{F}}\,\GR{\boldsymbol{\Delta x^k}}
{+} O(\|\alpha \GR{\boldsymbol{\Delta x^k}\|^2)}.
\label{eq:taylor_local}
\end{aligned}
\end{equation}
\GR{Substituting} the Newton direction \GR{from} \eqref{eq:wp} into \eqref{eq:taylor_local}\GR{, we obtain}
\begin{equation}
\mathsf{F}(\mathcal{X}^{k+1}) = (1{-}\alpha)\mathsf{F}(\mathcal{X}^k) {+}\alpha \rho_k + O(\|\alpha \GR{\boldsymbol{\Delta x^k}}\|^2).
\label{eq:residual_update}
\end{equation}
From Theorem~\ref{th:t1}, \GR{the nonsingularity of} $\boldsymbol{\mathfrak{J}}^{\mathcal{X}}_{\mathsf{F}}$ in \GR{the} neighborhood of $\mathcal{X}^*$ \GR{ensures that} $(\boldsymbol{\mathfrak{J}}^{\mathcal{X}}_{\mathsf{F}})^{-1}$ is bounded. \GR{This leads to the estimate}
\begin{equation}
\|\mathsf{F}(\mathcal{X}^{k+1})\|
{\le}
C_1 \|\rho_k\|
{+}
(1{-}\alpha)\|\mathsf{F}(\mathcal{X}^{k})\|
{+}
C_2\|\mathsf{F}(\mathcal{X}^{k})\|^2,
\label{eq:bound}
\end{equation}
for some constants $C_1,C_2>0$. \GR{The convergence behavior is determined by the parameters $\alpha$ and $\rho_k$}
\begin{itemize}

\item \GR{{Case $\alpha = 1$:} The linear term vanishes and \eqref{eq:bound} reduces to
$\|\mathsf{F}(\mathcal{X}^{k+1})\|
\le
C_1\|\rho_k\|
+
C_2\|\mathsf{F}(\mathcal{X}^{k})\|^2$. In this case, the behavior of the method is mainly governed by the residual $\rho_k$. Quadratic convergence is achieved if $\|\rho_k\| = O(\|\mathsf{F}(\mathcal{X}^{k})\|^2)$ (including $\rho_k=0$), whereas $\|\rho_k\| = O(\|\mathsf{F}(\mathcal{X}^{k})\|)$ results in linear convergence. }


\item \GR{{Case $0 < \alpha < 1$:} The term $(1-\alpha)\|\mathsf{F}(\mathcal{X}^{k})\|$ introduces a linear component in the estimate, which dominates the asymptotic behavior unless the residual $\rho_k$ decays sufficiently fast. In particular, the method exhibits linear convergence when $\|\rho_k\|=O(\|\mathsf{F}(\mathcal{X}^{k})\|)$ or $\|\rho_k\| = 0$.}
\end{itemize}
\GR{Finally, the nonsingularity of $\boldsymbol{\mathfrak{J}}^{\mathcal{X}}_{\mathsf{F}}(\mathcal{X}^*)$ ensures that $\|\mathsf{F}(\mathcal{X}^{k})\|\to 0$ that implies $\mathcal{X}^{k}\to \mathcal{X}^*$, completing the proof}.
\end{proof}
\section{Comparative Study}
\label{sec:statistical-analysis}
\GR{This section evaluates the performance of the proposed MLG-IPM through a quantitative comparison with the Riemannian Interior-Point Method (RIPM) \cite{lai2024riemannian} and a qualitative assessment against} a Euclidean Interior-Point Method (EIPM) \cite{el1996formulation}. \GR{The} quantitative analysis \GR{involved} $1000$ independent simulations on $\mathsf{SO}(7)$ and $\mathsf{SL}(7)$ \GR{per method, measuring success rate, computational time, number of iterations, and residual error. Because the Lilliefors test ($\gamma = 0.05$)} rejected normality for the continuous metrics, \GR{we employed} the nonparametric Wilcoxon–Mann–Whitney test \GR{for comparisons}. \GR{Categorical outcomes were analyzed via} the chi-square test and the two-proportion Z-test \GR{for} the null hypothesis $H_0: p_{\text{MLG}} = p_{\text{RIPM}}$, where $p_{\text{MLG}}$ and $p_{\text{RIPM}}$ denote the success probabilities of MLG-IPM and RIPM, respectively. \GR{Finally, effect} sizes were quantified using Cramér’s $V$ for categorical variables and Cohen’s $d$ for continuous metrics. 

\subsection{Comparison between RIPM and MLG-IPM in $\mathsf{SO}(7)$}
The optimization problem considered is
\begin{equation}
\begin{aligned}
&\min_{\mathbf{G_1},\mathbf{G_2} \in \mathsf{SO}(7)} \; 
\frac{1}{2}\sum_{i=1}^{2}\|\log(\mathbf{G_d}^\top \mathbf{G}_i)\|_F \\
&\text{s.t.} \; 
\mathbf{G}_i(1,1) - 0.5 \le 0, 
 \mathbf{G}_i(2,2) - 0.3 \le 0,
\end{aligned}
\label{eq:p1}
\end{equation}
with $\mathbf{G_1}$, $\mathbf{G_2}$, and $\mathbf{G_d}$ randomly sampled in $\mathsf{SO}(7)$ at each run.

\subsubsection{Without Perturbation}

\begin{table}[h]
\setlength{\tabcolsep}{3pt}
\centering
\caption{Statistical comparison between MLG-IPM and RIPM with $\rho_k = 0$ in $\mathsf{SO}(7)$.}
\label{tab:comparacao1}
\begin{tabular}{lcc}
\hline
\textbf{Metric} & \textbf{MLG-IPM} & \textbf{RIPM} \\
\hline
Time (s) Median      & 10.5661          & 20.8154          \\
Time (s) Mean   & $13.6649 \pm 25.2225$ & $18.4179 \pm 7.4139$ \\
Iterations Median     & 20.00          & 500.00         \\
Iterations Mean   & $25.76 \pm 47.84$ & $394.31 \pm 139.98$ \\
Success Rate            & 0.991 & 0.416          \\
Error Mean   & $5.0 {\times}10^{-5} {\pm} 4.17 {\times} 10^{-4} $ & $4.7 {\times} 10^{-3}{\pm} 1.7 {\times} 10^{-2}$ \\
\hline
\end{tabular}
\end{table}

Table~\ref{tab:comparacao1} summarizes the performance of both methods for $\rho_k = 0$, revealing not only statistical differences but also meaningful practical implications. 
\GR{The success rates highlight a fundamental gap in robustness: MLG-IPM converges in nearly all executions, whereas RIPM fails in a significant portion of cases. The strong association between optimizer choice and convergence (Cramér's $V = 0.628$) confirms that the probability of success is highly dependent on the selected method.}

\GR{Computationally, MLG-IPM achieves solutions faster and with fewer iterations. Conversely, the high frequency with which RIPM reaches the iteration limit suggests stagnation near the stopping threshold rather than stable convergence. This indicates a structural difference in behavior: while MLG-IPM progresses steadily toward optimality, RIPM often fails to satisfy termination criteria within reasonable bounds.}

\GR{Regarding accuracy, MLG-IPM yields a lower mean final error across successful runs, indicating convergence closer to the optimal solution. The larger errors and greater dispersion observed for RIPM reflect reduced numerical precision. Collectively, these results establish the superiority of MLG-IPM across three dimensions: reliability (success rate), efficiency (computational cost), and accuracy (final residual). For $\rho_k = 0$, the MLG-IPM advantages are both statistically significant and structurally decisive.}


\subsubsection{With Perturbation}

\begin{table}[h]
\centering
\setlength{\tabcolsep}{3pt}
\caption{Statistical comparison between MLG-IPM and RIPM with $\rho_k = 0.01\|\mathsf{F}^k\|$ in $\mathsf{SO}(7)$ .}
\label{tab:comparison2}
\begin{tabular}{lcc}
\hline
\textbf{Metric} & \textbf{MLG-IPM} & \textbf{RIPM} \\
\hline
Time (s) Median      & 11.44          & 22.29          \\
Time (s) Mean    & $18.88 \pm 40.89$ & $18.94 \pm 7.50$ \\
Iterations Median     & 21.00          & 500.00         \\
Iterations Mean   & $35.12 \pm 77.52$ & $393.45 \pm 141.44$ \\
Success Rate            & 0.973 & 0.388          \\
Error Mean          & $4.940 {\times} 10^{-3} {\pm} 1.0 {\times} 10^{-6}$ & $9.9 {\times} 10^{-3} {\pm}1.7 {\times} 10^{-2}$       \\
\hline
\end{tabular}
\end{table}

Table~\ref{tab:comparison2} reports the results obtained with residual based perturbation, $\rho_k = 0.01\|\mathsf{F}^k\|$. Although the regularization slightly modifies the numerical behavior of both methods, the overall performance hierarchy remains unchanged.

The success rate continues to favor MLG-IPM by a wide margin, indicating that the perturbation does not significantly alter its robustness. The \GR{chi-square} and \GR{Z- tests} confirm statistical significance ($p < 0.001$), and the large effect size (Cramér's $V = 0.626$) reinforces that optimizer choice remains strongly associated with convergence outcome even under regularization.

In terms of computational effort, the Wilcoxon-Mann-Whitney tests again reject $H_0$ for both time and iterations. As shown in Table~\ref{tab:comparison2}, MLG-IPM maintains lower median time and substantially fewer iterations. Although its mean time presents higher dispersion, this behavior stems from occasional longer runs rather than systematic inefficiency. In contrast, RIPM continues to accumulate results at the maximum iteration limit, indicating persistent structural difficulty in meeting stopping criteria despite the perturbation.

Regarding solution quality, the final error values confirm that MLG-IPM preserves superior numerical accuracy among successful runs. The Wilcoxon-Mann-Whitney  test detects statistical significance, and the difference in mean error suggests that residual based regularization does not compensate for the intrinsic convergence limitations observed in RIPM.

\subsection{Comparison between RIPM and MLG-IPM in $\mathsf{SL}(7)$}

For the comparison in $\mathsf{SL}(7)$, we consider the following optimization problem:
\begin{equation}
\begin{aligned}
&\min_{\mathbf{G_1},\mathbf{G_2} \in \mathsf{SL}(7)} \; 
\frac{1}{2}\sum_{i=1}^{2}\text{Tr}(\mathbf{G_d}^{-1} \mathbf{G}_i)^2 \\
&\text{s.t.} \;
-\text{Tr}(\mathbf{G_d}^{-1} \mathbf{G}_i)^2+0.2\le 0, \\
\end{aligned}
\label{eq:p2}
\end{equation}
with $\mathbf{G_1}$, $\mathbf{G_2}$, and $\mathbf{G_d}$ randomly sampled in $\mathsf{SL}(7)$ at each run.

\subsubsection{Without Perturbation}
\begin{table}[h]
\setlength{\tabcolsep}{3pt}
\centering
\caption{Statistical comparison between MLG-IPM and RIPM with $\rho_k = 0$ in $\mathsf{SL}(7)$.}
\label{tab:comparacao3}
\begin{tabular}{lcc}
\hline
\textbf{Metric} & \textbf{MLG-IPM} & \textbf{RIPM} \\
\hline
Time (s) Median      & 0.1187          & 6.6486          \\
Time (s) Mean   & $0.2590 \pm 0.4097$ & $7.60 \pm 7.83$ \\
Iterations Median     & 58          & 500         \\
Iterations Mean   & $66.98 \pm 63.31$ & $369 \pm 188.1$ \\
Success Rate            & 0.981 & 0.359          \\
Error Mean   & $6.6 {\times}10^{-5} \pm 2.0 {\times}10^{-3}$ & $2.7 {\times}10^{-3} \pm 1.4 {\times}10^{-2}$ \\
\hline
\end{tabular}
\end{table}

Table~\ref{tab:comparacao3} shows that MLG-IPM converged in 98.1\% of executions, whereas RIPM succeeded in only 35.9\% (an absolute difference of 62.2 percentage points). The high Cramér's $V$ ($V = 0.660$) indicates a strong association between optimizer choice and convergence probability, highlighting a substantial robustness advantage for MLG-IPM.

To formally assess the difference in success proportions, we performed a chi-square test, yielding $\chi^2 = 872.1$. Although the reported $p$ value was $p = 1$, the Z-test for proportions resulted in $Z = 29.6$ with $p < 0.05$, indicating a statistically significant difference. The extremely large Z statistic reinforces that the observed discrepancy is unlikely due to random variation, consistent with the large effect size measured by Cramér's $V$.

Computationally, both time and iteration distributions were identified as non normal (Lilliefors test, $p < 0.05$), so we conducted comparisons using the Wilcoxon-Mann-Whitney test. The differences were statistically significant ($p < 0.05$) for both metrics. The median execution time of MLG-IPM (0.1187~s) is \GR{much} smaller than that of RIPM (6.6486~s), yielding a median difference of approximately 6.53 seconds. Similarly, the median iteration count (58 vs.\ 500) indicates that RIPM frequently reaches the maximum iteration limit, suggesting stagnation rather than natural convergence.

Regarding solution accuracy, considering only successful runs, MLG-IPM achieved a substantially smaller mean final error ($6.6 \times 10^{-5}$) compared to RIPM ($2.68 \times 10^{-3}$). The Wilcoxon\GR{-Mann-Whitney} test confirmed statistical significance ($p < 0.05$). Moreover, RIPM exhibits greater error dispersion, indicating reduced numerical consistency.

\subsubsection{With Perturbation}
\begin{table}[h]
\setlength{\tabcolsep}{3pt}
\centering
\caption{Statistical comparison between MLG-IPM and RIPM with $\rho_k = 0.001\|\mathsf{F}^k\|$ in $\mathsf{SL}(7)$.}
\label{tab:comparacao4}
\begin{tabular}{lcc}
\hline
\textbf{Metric} & \textbf{MLG-IPM} & \textbf{RIPM} \\
\hline
Time (s) Median      & 0.1346          & 6.8945          \\
Time (s) Mean   & $0.36 \pm 0.60$ & $6.28 \pm 6.77$ \\
Iterations Median     & 62          & 500         \\
Iterations Mean   & $106.15 \pm 123.96$ & $344.54 \pm 197.3$ \\
Success Rate            & 0.937 & 0.408          \\
Error Mean   & $5 \times 10^{-4} \pm 5 \times 10^{-3}$ & $2 \times 10^{-3} \pm 1 \times 10^{-2}$ \\
\hline
\end{tabular}
\end{table}

Table~\ref{tab:comparacao4} presents the statistical comparison between MLG-IPM and RIPM based on 1000 independent runs per method for problem \eqref{eq:p2}. The results reveal statistically significant and practically meaningful differences across all evaluated metrics.

In terms of reliability, MLG-IPM converged in 93.7\% of executions, whereas RIPM succeeded in only 40.8\% . The chi-square test gave $\chi^2 = 632.90$, and although the reported $p$ value was $p = 1.00$, the Z-test for proportions yielded $Z = 25.21$ with $p < 0.05$, indicating a statistically significant difference. Furthermore, Cramér's $V = 0.563$ indicates a large effect size, confirming a strong association between optimizer choice and convergence probability.

Computationally, the Lilliefors test indicated that both time and iteration distributions deviate from normality ($p < 0.05$), so we used the Wilcoxon-Mann-Whitney test. The difference in execution time was statistically significant ($p < 0.05$), with MLG-IPM presenting a substantially lower median time (0.1346 s) compared to RIPM (6.8945 s).

A similar pattern emerged for iteration count. MLG-IPM required significantly fewer iterations (median of 62) than RIPM (median of 500), which frequently reached the maximum iteration limit, suggesting stagnation rather than natural convergence. The Wilcoxon test confirmed statistical significance ($p < 0.05$), with a median difference of 438 iterations.

Regarding numerical accuracy, considering only successful runs, MLG-IPM achieved a lower mean final error ($5.22 \times 10^{-4}$) compared to RIPM ($1.67 \times 10^{-3}$). This difference was also statistically significant according to the Wilcoxon\GR{-Mann-Whitney} test ($p < 0.05$). Additionally, RIPM exhibited greater error dispersion, indicating lower numerical consistency.

\subsection{Comparison between EIPM and MLG-IPM}

We compare the EIPM \cite{el1996formulation} and the proposed MLG-IPM through a structural and algorithmic analysis. The comparison focuses on three main aspects: the underlying geometric framework, the treatment of structural constraints, and the computational implications of solving the Newton-type systems arising at each iteration.

\paragraph{Geometric setting}
EIPM is formulated in Euclidean space, where structural constraints are imposed explicitly through equality conditions. In contrast, MLG-IPM operates directly on a matrix Lie group, treating the feasible set as a smooth manifold endowed with group structure.

\paragraph{Treatment of constraints}
In EIPM, properties such as orthogonality \GR{and} determinant conditions are enforced via equality constraints and Lagrange multipliers. In MLG-IPM, these properties are intrinsic to the Lie group and \GR{are therefore} incorporated directly into the search space. This leads to a more compact problem formulation.

\paragraph{Computational dimension}
Both methods require the solution of a Newton-type linear system at each iteration. In EIPM, the system dimension includes primal variables and multipliers associated with equality constraints. In MLG-IPM, the search direction is computed in the Lie algebra, whose dimension equals that of the group. When a group structure replaces explicit constraints, this may reduce the dimension of the linear system and potentially improve computational efficiency.

\paragraph{Modeling capability}
Problems defined over structured matrix sets can be difficult to treat in EIPM due to nonlinear equality constraints. MLG-IPM handles such problems naturally by preserving the group structure throughout the iterations.

\section{Conclusion}

This work introduced MLG-IPM as an interior-point method formulated directly on matrix Lie groups through a minimal Lie algebra parametrization, providing a geometrically consistent and compact framework for constrained optimization on non-Euclidean manifolds. By computing search directions in local exponential coordinates and performing multiplicative updates that preserve the intrinsic structure of the group, the method avoids redundant parametrizations and explicit metric constructions while maintaining the main theoretical properties of primal--dual interior-point schemes. The local convergence analysis \GR{established} quadratic convergence under standard assumptions and \GR{characterized} the influence of inexact Newton steps on the convergence rate. Numerical experiments \GR{indicated} that MLG-IPM achieves higher success rates, fewer iterations, and improved numerical accuracy when compared to RIPM, while maintaining robustness under perturbations. These results suggest that MLG-IPM is a practical approach for constrained optimization on matrix Lie groups, with potential applications in robotics, geometric control, and estimation on nonlinear manifolds. Future work will focus on establishing global convergence results, developing algorithmic improvements to enhance scalability for large-scale problems, and exploring applications in robotics and geometric control, where optimization on matrix Lie groups naturally arises.

\bibliographystyle{IEEEtran}
\bibliography{reference}

\end{document}